\title{A Closer Look at the Multilinear Cryptography\\ using Nilpotent Groups}
\author{Delaram Kahrobaei, Antonio Tortora, Maria Tota}
\date{ }
\begin{document}
\maketitle

\begin{abstract}
In a previous paper we generalized the definition of a multilinear map to arbitrary groups and introduced two multiparty key-exchange protocols using nilpotent groups. In this paper we have a closer look at the protocols and will address some incorrect cryptanalysis which have been proposed.\\
 
\noindent{\bf 2010 Mathematics Subject Classification:} 20F18, 94A60\\
{\bf Keywords:} multilinear map, nilpotent group, key-exchange protocol

\end{abstract}

\section{Introduction}

Let $n$ be a positive integer. In \cite{KTT} we generalized the definition of a multilinear map for cyclic groups of the same prime order (see \cite{BS}) to arbitrary groups $G_1,\dots,G_n$ and $G_T$. Indeed, we defined a map 
$$e:G_1\times \dots\times G_n \rightarrow G_T$$ 
to be an $n$-linear map (or a multilinear map) if for any $a_1, \dots, a_n \in \mathbb{Z}$ and any $g_i\in G_i$, we have $$e(g_1^{a_1}, \dots, g_n^{a_n}) = e(g_1, \dots, g_n)^{a_1 \cdots a_n}.$$
The map $e$ is symmetric when $G_1=\dots=G_n=G$, and
non-degenerate if there exists $g\in G$ such that $e(g,\stackrel{n}{\dots},g)\neq 1$. Notice also that $e$ is not necessarily  linear in each component. 


In this paper we have a closer look at two multiparty key-exchange protocols introduced in \cite{KTT}.  Also, we will address some incorrect cryptanalysis which have been proposed in \cite{Rom}.

The protocols are based on the use of nilpotent group identities. Recall that a group $G$ is nilpotent if it has a finite series of subgroups
$$G=H_{0}>H_1>\dots>H_{n}=\{1\}$$
which is central, that is, each $H_i$ is normal in $G$ and $H_{i}/H_{i+1}$ is contained in the center of $G/H_{i+1}$. The length of a shortest central series is the nilpotency class of $G$. Hence, nilpotent groups
of class at most 1 are abelian. A great source of nilpotent groups is the class of finite $p$-groups, i.e., finite groups whose orders are powers of a prime $p$.

\section{The Protocols}

Let $G$ be a nilpotent group of class $n>1$ and let $g_{1},\dots, g_{n+1}$ be elements of $G$. Then, by a characterization of nilpotent groups, we have
$$[g_1,\dots,g_{n+1}]=1$$
where the commutator $[g_1,\dots,g_{n+1}]$ is defined recursively by the rules
$$[g_1, g_2] = g_1^{-1} g_2^{-1} g_1 g_2 \quad 
{\rm and} \quad  
[g_{1},\dots,g_{n+1}]=
[[g_{1},\ldots,g_{n}],g_{n+1}].$$ 
According to Proposition 3 of \cite{KTT}, for any $i\in\{1,\dots,n\}$ and $a_i\in \mathbb{Z}\backslash\{0\}$, it follows that
$$[g_1,\dots, g_{i-1}, g_i^{a_i}, g_{i+1},\dots,g_{n}]=[g_1,\dots, g_{i-1}, g_i, g_{i+1},\dots,g_{n}]^{a_i}.$$
This allows to construct the multilinear map $e:G^{n} \rightarrow G$ given by
$$e({g_1},\dots, {g_{n}})=[g_1,\dots,g_{n}].$$
Notice that, for $n=2$, $e$ is the bilinear map which has been mentioned in \cite{MS}. 

A group $G$ is said to be $n$-Engel, with $n\geq 1$, if 
$$[x,_n g]=[x, \underbrace{g,\dots, g}_n]=1$$ 
for all $x,g \in G$. Of course any nilpotent group of class $n$ is $n$-Engel. Also, it is well-known that there exist 
nilpotent groups of class $n+1$ which are not $n$-Engel (see, for instance, \cite[Theorem 6.2]{Liebeck}). 
Thus, if $G$ is such a group and $x,g$ are elements of $G$ such that $[x,_n g]\neq 1$, one can consider the non-degenerate multilinear map $e':G^n \rightarrow G$ given by
$$e'({g_1},\dots, {g_{n}})=[x,{g_1},\dots, {g_{n}}].$$

In \cite{KTT} we proposed the following two key exchange protocols based on the multilinear maps $e$ and $e'$, respectively. In both cases, we have $n+1$ users ${\cal{A}}_1,\dots,{\cal{A}}_{n+1}$ with private nonzero integers $a_1, \dots, a_{n+1}$, respectively, who want to agree on a shared secret key. 

\bigskip

\noindent {\bf Protocol I}

\medskip

Let $G$ be a public nilpotent group of class $n>1$ and let $g_1, \dots, g_n \in G$ such that $[g_1,\dots,g_n]\neq 1$.

\begin{itemize}
\item The users ${\cal{A}}_1$ and ${\cal{A}}_{n+1}$ transmit in public channel ${g_1}^{a_1}$ and ${g_n}^{a_{n+1}}$, respectively; the user ${\cal{A}}_j$ ($j =2, \dots, n$) transmits ${g_{j-1}}^{a_j}$ and ${g_j}^{a_j}$.
\item The user ${\cal{A}}_1$ computes $[{g_1}^{a_2}, \dots, {g_{n}}^{a_{n+1}}]^{a_1}$.
\item The user ${\cal{A}}_j$ ($j =2, \dots, n$) computes $$[{g_1}^{a_1}, \dots, {g_{j-1}}^{a_{j-1}}, {g_j}^{a_{j+1}}, {g_{j+1}}^{a_{j+2}}, \dots, {g_{n}}^{a_{n+1}}]^{a_j}.$$

\item The user ${\cal{A}}_{n+1}$ computes $[{g_1}^{a_1}, \dots, {g_{n}}^{a_{n}}]^{a_{n+1}}$.
\end{itemize}
Hence, each user obtains $[g_1,\dots, g_{n}]^{\prod_{j=1} ^{n+1} a_j}$ which is the shared key.

\bigskip

\noindent {\bf Protocol II}

\medskip

Let $G$ be a public nilpotent group of class $n+1$ which is not $n$-Engel ($n\geq 1$) and let $x,g\in G$ such that $[x,_n g]\neq 1$. 
\begin{itemize}
\item Each user ${\cal{A}}_j$ computes $g^{a_j}$ and sends it to the other users.
\item The user ${\cal{A}}_1$ computes $[x^{a_1},g^{a_2},\dots,g^{a_{n+1}}]$.
\item The user ${\cal{A}}_j$ $(j=2,\dots,n)$ computes $[x^{a_j}, g^{a_1}, \dots,g^{a_{j-1}},g^{a_{j+1}},\dots, g^{a_{n+1}}]$.
\item The user ${\cal{A}}_{n+1}$ computes $[x^{a_{n+1}},g^{a_1},\dots,g^{a_n}]$.
\end{itemize}
The common key is $[x,_n g]^{{\prod}_{j=1} ^{n+1} a_j}$.

\subsection{Platform groups}

As a basis for the key exchange methods described above, we suggest to consider finitely generated nilpotent groups. These groups are polycyclic, and in particular supersoluble (see, for instance, \cite[5.4.6]{Rob}).
Recall that a group $G$ is said to be polycyclic if it has a finite cyclic series, that is, a sequence of subgroups
\begin{equation}\label{poly}
G=G_{1}>G_2>\dots>G_{n+1}=\{1\} 
\end{equation}
such that, for $1\leq i\leq n$, $G_{i+1}$ is normal in $G_{i}$ and $G_{i}/G_{i+1}$ is cyclic. The group $G$ is then supersoluble when the series is normal, i.e., each $G_{i+1}$ is normal in $G$.

Let $G$ be a polycyclic group with the cyclic series $(\ref{poly})$. Following \cite{HEO}, we say that a sequence $X=(x_1,\dots, x_n)$ of elements of $G$ such that $G_{i}/G_{i+1}=\langle x_{i}G_{i+1} \rangle$, for $1\leq i\leq n$, is a polycyclic sequence for $G$; hence, each  $G_i$ is generated by $x_{i},\dots, x_n$. Defining $r_i=\vert G_i:G_{i+1}\vert\in \mathbb{N}\cup \{\infty\}$, the sequence
$R(X)=(r_1,\dots, r_n)$ is called the sequence
of relative orders for $X$. The set $\{i\in\{1,\dots, n\}\,\vert\, r_i\,{\rm is\,finite}\}$ is usually denoted by $I(X)$.
If $X$ is a polycyclic sequence for $G$ with the relative orders $R(X)=(r_1,\dots,r_n)$, then for any $g\in G$ there exists a unique sequence $(e_1,\dots,e_n)$ of integers, with $0\leq e_i< r_i$ if $i\in I(X)$, such that
$$g=x_1^{e_1}\dots x_n^{e_n}$$
(see \cite[Lemma 8.3]{HEO}). This latter expression is the normal form of $g$ with respect to $X$, and $(e_1,\dots,e_n)$ is the exponent vector of $g$ with respect to $X$. 

A group presentation $\langle x_1,\dots,x_n\,\vert\, R\rangle$ is called a nilpotent presentation if there exists a sequence  $S=(s_1,\dots,s_n)$, with $s_i\in\mathbb{N}\cup\{\infty\}$, and integers $a_{i,k},b_{i,j,k},c_{i,j,k}$ such that $R$ consists of the following relations:
\begin{align*}
x_i^{s_i} & = x_{i+1}^{a_{i,i+1}}\dots x_n^{a_{i,n}}\quad {\rm for}\; 1\leq i\leq n\; {\rm with}\; s_i\in\mathbb{N}, \\
x_j^{-1}x_i x_j & = x_i x_{i+1}^{b_{i,j,i+1}}\dots x_n^{b_{i,j,n}}\quad {\rm for}\; 1\leq j<i\leq n,\\
x_j x_i x_j^{-1} & = x_i x_{i+1}^{c_{i,j,i+1}}\dots x_n^{c_{i,j,n}} \quad {\rm for}\; 1\leq j<i\leq n.
\end{align*}
Every finitely generated nilpotent group has a polycyclic sequence $X$ which induces a consistent nilpotent presentation, where consistent means that $R(X)=S$; conversely, every consistent nilpotent presentation defines a finitely generated nilpotent group (see \cite[Lemma 8.23]{HEO}). Furthermore, for a group $G$ given by a consistent nilpotent presentation $\langle x_1,\dots,x_n\,\vert\, R\rangle$, there exists a method that allows to determine the normal form of any $g\in G$ with respect to $X=(x_1,\dots,x_n)$  (see \cite[Subsection 8.1.3]{HEO}). This is the so-called collection algorithm, which is implemented in GAP \cite{GAP} and MAGMA \cite{MAGMA}, and it has proved to be practical for finite and infinite groups.

In the context of Protocol II (and similarly for Protocol I), assuming that $\langle x_1,\dots,x_n\,\vert\, R\rangle$ is a consistent nilpotent presentation of $G$ and that $x_1^{e_{1}}\dots x_n^{e_{n}}$ is the normal form of $g$, we can make public the exponent vector $(e_{1},\dots,e_{n})$ and require the following: each user ${\cal{A}}_j$ computes the normal form $x_1^{a_{j1}}\dots x_n^{a_{jn}}$ of $g^{a_j}$ and sends the exponent vector $(a_{j1},\dots,a_{jn})$ to the other users.


\section{Cryptanalysis and Attacks}

The security of our protocols is based on the Power Search Problem (PSP): given a group $G$ and elements $g,h\in G$, find an integer $a$ such that $g^a=h$. This is actually equivalent to the Discrete Logarithm Problem (DLP) in the cyclic group generated by $g$.

Recently, for a finite nilpotent group, an algorithm to solve the PSP has been introduced in \cite{Rom}. However, we point out that it is not practical. To this end, assume that $G$ is a finite $p$-group. Also, let $G=G_{0}$ and for $i\geq 0$ define recursively $G_{i+1}=G_i^pG_i'$, where $G_i^p=\langle g_i^p\,\vert\,g_i\in G_i\rangle$ and $G_i'=\langle [g_{i1},g_{i2}]\,\vert\,g_{i1},g_{i2}\in G_i\rangle$. Since the order of $G$ is a power of $p$, one can consider in $G$ the normal series 
\begin{equation}\label{series}
G=G_{0}>G_1>\dots>G_{n}=\{1\}.
\end{equation} 
Notice that each factor $G_i/G_{i+1}$ is an abelian group of exponent $p$. Therefore $g^{p^{k}}\in G_k$ for any $g\in G$ and any $k\in\{1,\dots,n\}$. 

Now suppose $g^a=h$, for some $g, h\in G$, and write $a=a_0+a_1p+\dots+a_{n-1}p^{n-1}$ where $0\leq a_i<p$. Then 
\begin{equation}\label{DLP}
(gG_1)^{a_0}=hG_1.
\end{equation}
 The first step of the algorithm in \cite{Rom} consists in finding $a_0$. In particular, when $g,h\not\in G_1$, the author affirms that the ``exponent $a_0$ is uniquely computed by usual computation with vectors in $G/G_1$''. After examining the case $(g\not\in G_1,h\in G_1)$, the process continues until all the integers $a_i$ are obtained.

Actually it could be very difficult to find $a_0$ when the prime $p$ is big enough. For example, let $p$ be a safe prime, i.e, $p=2q+1$ with $q$ prime. Assume further that $G=\langle g^2\rangle$, where $g$ is a generator of the multiplicative group of integers modulo $p$. Thus the series (\ref{series}) becomes $G=G_{0}>G_1=\{1\}$. Hence, finding $a_0$ from (\ref{DLP}) essentially means to solve the DLP of $h$ with respect to $g$. However $G$ has order $q$ and, when $q$ is very large, it is well-known that the DLP is hard in $G$ (without the use of a quantum algorithm \cite{Shor}).

\subsection{The PSP for Unitriangular Matrices}

Let $R$ be a commutative ring with identity $1$ and denote by $UT(n, R)$,  where $n>1$, the group of all $n\times n$ (upper) unitriangular matrices over $R$, that is (upper) triangular matrices with $1$ on the diagonal. Then $UT(n, R)$ is a nilpotent group of class $n-1$, which is generated by finitely many elements when $R=\mathbb{Z}_m$ or $\mathbb{Z}$ (see, for instance, \cite[Section 5.4]{Rob}). Moreover every finite $p$-group can be embedded in $UT(n, F_p)$, for a finite field $F_p$ of characteristic $p$ (see \cite[Exercise 5.1.11]{Rob}), and every finitely generated torsion-free group can be embedded in $UT(n, \mathbb{Z})$ (see \cite[3.3.4]{LR}). 

In \cite{Rom} the above mentioned algorithm has been applied to solve the PSP in $UT(n,F_p)$. In addition, a ``similar algorithm'' (but without details) has been proposed for the same purpose in $UT(n,\mathbb{Z})$. On the other hand, in \cite[Subsection 2.2]{Mah12}, for a finite field $K$, it has been shown that the PSP for a matrix in $UT(4,K)$ (provided that the entries of the superdiagonal are not all zero!) can be reduced to the PSP in the additive group of $K$. This can be generalized as follows.

\begin{proposition}\label{UT}
Let $R$ be a commutative ring with identity. Then solving the PSP in $UT(n,R)$ is equivalent to solve the PSP in the additive group of $R$.
\end{proposition}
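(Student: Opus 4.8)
The plan is to prove the equivalence in both directions by exhibiting explicit reductions. For the easy direction — reducing the PSP in $(R,+)$ to the PSP in $UT(n,R)$ — I would use the standard embedding of the additive group of $R$ into $UT(n,R)$, say as the elementary matrices $I + r E_{12}$ (where $E_{12}$ is the matrix unit), under which $(I + rE_{12})^a = I + (ar)E_{12}$. Thus an instance $ar = s$ of the PSP in $(R,+)$ becomes the instance $(I+rE_{12})^a = I + sE_{12}$ in $UT(n,R)$, and conversely any solution of the latter solves the former. This part is a routine verification.

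The substantive direction is reducing the PSP in $UT(n,R)$ to the PSP in $(R,+)$: given $g, h \in UT(n,R)$ with $g^a = h$, recover $a$ using only additive-group power searches. Here I would proceed by induction on $n$, looking at the superdiagonal. Write $g = I + N$ with $N$ strictly upper triangular; the key computation is the formula for $g^a = (I+N)^a = \sum_{k\ge 0}\binom{a}{k} N^k$, where the sum is finite because $N^n = 0$. Reading off the superdiagonal entries: if $(N)_{i,i+1} = \nu_i$, then $(g^a)_{i,i+1} = a\,\nu_i$ for each $i$. So as long as some $\nu_i$ is not a zero divisor — or more precisely, as long as the system $\{a\nu_i = (h)_{i,i+1}\}_i$ pins down $a$ — one recovers $a$ by a single PSP in $(R,+)$. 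The cited result from \cite{Mah12} is exactly this phenomenon for $UT(4,K)$ with nonzero superdiagonal.

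The main obstacle is the degenerate case where $g$ has \emph{all} superdiagonal entries equal to zero (or, over a general ring, where the superdiagonal entries fail to determine $a$). In that situation $g$ lies in $UT(n,R)' \le \gamma_2$, i.e.\ $N^2$ may be the ``leading'' nonzero part, and one must look deeper into the matrix. The clean way to handle this is to pass to a quotient: if the superdiagonal of $g$ vanishes, then $g$ and $h$ both lie in the subgroup $Z$ of matrices that are the identity on the first superdiagonal, but more usefully one can project $UT(n,R)$ onto smaller unitriangular groups. Concretely, deleting the last row and column (or the first) gives a homomorphism $UT(n,R)\to UT(n-1,R)$, and $g^a=h$ maps to an instance in $UT(n-1,R)$; by induction this is solved via PSP in $(R,+)$, \emph{unless} $g$ also maps trivially under both such projections, which forces $g$ to be supported only in the top-right corner, where again $(g^a)_{1,n} = a\,(g)_{1,n}$ and a single additive PSP finishes. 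I would organize the induction so that at each stage either some entry gives a direct additive PSP for $a$, or we descend to a strictly smaller unitriangular group; the bookkeeping that the candidate $a$ produced at one stage is consistent with (and determined by) the full equation $g^a = h$ is the point requiring care, but it follows from the binomial formula above since every entry of $g^a$ is a fixed polynomial in $a$ whose lower-order behavior is controlled inductively.
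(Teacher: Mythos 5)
Your argument is correct in substance but routed differently from the paper's. The paper proves the nontrivial direction with a single observation and no induction on $n$: writing $g=(b_{ij})\neq I$ and $g^a=(c_{ij})$, pick any row $i$ containing a nonzero entry above the diagonal and let $k$ be minimal with $b_{i(i+k)}\neq 0$; then $c_{i(i+k)}=a\,b_{i(i+k)}$, because in the expansion $(I+N)^a=\sum_{j\geq 0}\binom{a}{j}N^j$ every contribution to the $(i,i+k)$ entry from $N^j$ with $j\geq 2$ contains a factor $N_{it}$ with $t-i<k$, which vanishes by the minimality of $k$. This is exactly your binomial computation, but applied per row to the leftmost nonzero entry rather than to the whole first superdiagonal; it therefore absorbs your ``degenerate'' case automatically and makes the induction on $n$ and the two corner-deleting homomorphisms $UT(n,R)\to UT(n-1,R)$ unnecessary (your corner case $g=I+rE_{1n}$ is just the instance $i=1$, $k=n-1$). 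Your induction is sound --- the deletion maps are homomorphisms and your trichotomy is exhaustive --- but it spends more bookkeeping to reach the same endpoint: one additive instance $ab=c$ with $b\neq 0$ satisfied by the true exponent. Your treatment of the easy direction via $r\mapsto I+rE_{12}$ is the standard embedding the paper leaves implicit. Finally, the consistency point you flag is real (a solution $a'$ of the single additive instance need not satisfy $g^{a'}=h$ when the chosen entry is a zero divisor, e.g.\ over $\mathbb{Z}_m$, so one may have to test or combine several entries), but the paper's proof is equally silent on it: both arguments exhibit an additive PSP instance solved by the true exponent rather than pinning it down uniquely, so this is a shared imprecision, not a gap specific to your proposal.
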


\begin{proof} Let $g\in UT(n,R)$ and suppose that $g=(b_{ij})$ is not the identity matrix. It is enough to prove that for any $m\geq 1$, if $g^a=(c_{ij})$, then there exists $k\geq 1$ such that $0\neq c_{i(i+k)}=ab_{i(i+k)}$. This follows by induction on $m$. In fact, if $b_{i(i+1)}\neq 0$ for some $i$, then $c_{i(i+1)}=ab_{i(i+1)}$; if $b_{i(i+l)}=0$ and $b_{i(i+k)}\neq 0$ for some $k,l$ such that $1\leq l<k\leq n-i $, then $c_{i(i+k)}=ab_{i(i+k)}$. 
\end{proof}

Since the DLP in the additive group of $\mathbb{Z}_m$ or $\mathbb{Z}$ is easy, Proposition \ref{UT} implies that the groups $UT(n,\mathbb{Z}_m)$ and $UT(n,\mathbb{Z})$ are not suitable for Protocols I and II. 

Finally notice that, in contrast with \cite{Rom}, solving the PSP in finite $p$-groups does not guarantee that this is possible for a finite nilpotent group of order $n$, because the decomposition of $n$ in prime factors could be unknown. A similar argument holds for the torsion subgroup of a finitely generated infinite nilpotent group.

\bigskip

\noindent {\bf Acknowledgments}. The last two authors are members of the ``National Group for Algebraic and Geometric Structures, and their Applications'' (GNSAGA -- INdAM) and they would like to thank the Department of Computer Science of the University of York for the excellent hospitality while part of this paper was being written. Their research is supported by a grant of the University of Campania ``Luigi Vanvitelli'', in the framework of Programma V:ALERE 2019.


\end{document}